\newtheorem{thm}{Theorem}[section]
\newtheorem{prop}[thm]{Proposition}
\newtheorem{cor}[thm]{Corollary}
\newtheorem{lem}[thm]{Lemma}
\theoremstyle{definition}
\newtheorem{defn}[thm]{Definition}
\theoremstyle{remark}
\newtheorem{rem}[thm]{Remark}
\newtheorem{ex}[thm]{Example}
\newcommand{\K}{{\mathbb R}}
\newcommand{\D}{{\mathcal D}}
\newcommand{\R}{{\mathbb R}}
\newcommand{\mapright}[1]{%
 \smash{\mathop{%
  \hbox to 1cm{\rightarrowfill}}\limits_{#1}}}
\newcommand{\maprightd}[2]{%
 \smash{\mathop{%
  \hbox to 1.2cm{\rightarrowfill}}\limits^{#1}\limits_{#2}}}
\newcommand{\mapleft}[1]{%
 \smash{\mathop{%
  \hbox to 1cm{\leftarrowfill}}\limits_{#1}}}
\newcommand{\mapleftu}[1]{%
 \smash{\mathop{%
  \hbox to 0.8cm{\leftarrowfill}}\limits^{#1}}}
\newcommand{\maprightu}[1]{%
 \smash{\mathop{%
  \hbox to 1cm{\rightarrowfill}}\limits^{#1}}}
\newcommand{\maprightud}[2]{%
 \smash{\mathop{%
  \hbox to 1cm{\rightarrowfill}}\limits^{#1}_{#2}}}
\newcommand{\mapleftud}[2]{%
 \smash{\mathop{%
  \hbox to 1cm{\leftarrowfill}}\limits^{#1}_{#2}}}
\newcounter{eqn}[section]
\def\theeqn{\textnormal{(\thesection.\arabic{eqn})}}
\def\eqnlabel#1{%
  \refstepcounter{eqn}%
  \label{#1}%
  \leqno{\theeqn}}
\begin{document}

\title{A comparison between two de Rham complexes in diffeology 
}

\footnote[0]{{\it 2010 Mathematics Subject Classification}: 
57P99, 55U10, 58A10.
\\ 
{\it Key words and phrases.} Diffeology, \v{C}ech--de Rham spectral sequence, singular de Rham complex. 


Department of Mathematical Sciences, 
Faculty of Science,  
Shinshu University,   
Matsumoto, Nagano 390-8621, Japan   
e-mail:{\tt kuri@math.shinshu-u.ac.jp}
}

\author{Katsuhiko KURIBAYASHI}
\date{}

\begin{abstract} 
There are two de Rham complexes in diffeology. The original one is due to Souriau and the other one is the singular de Rham complex defined by a simplicial differential graded algebra. 
We compare the first de Rham cohomology groups of the two complexes within the \v{C}ech--de Rham spectral sequence by making use of the {\it factor map} which connects the two de Rham complexes.  
As a consequence, it follows that the singular de Rham cohomology algebra of the irrational torus $T_\theta$ is isomorphic to
 the tensor product of the original de Rham cohomology and the exterior algebra generated by a non-trivial flow bundle over $T_\theta$. 
\end{abstract}

\maketitle

\section{Introduction} 
The de Rham complex introduced by Souriau \cite{So} is very beneficial in the study of diffeology; see \cite[Chapters 6,7,8 and 9]{IZ}. 
In fact, the de Rham calculus is 
applicable to not only diffeological path spaces but also more general mapping spaces. 
It is worth mentioning that the de Rham complex is a variant of the codomain of Chen's iterated integral map \cite{Chen}. 
While the complex is isomorphic to the usual de Rham complex 
if the input diffeological space is a manifold, the de Rham theorem does not hold in general. 

In \cite{K_2019}, we introduced another cochain algebra called the {\it singular de Rham complex} via the context of simplicial sets. 
It is regarded as a variant of the cubic de Rham complex introduced by Iwase and Izumida in \cite{I-I} and a diffeological counterpart of the singular de Rham complex in \cite{B-G, Su, Sw}. 

An advantage of the new complex is that 
the de Rham theorem holds for {\it every} diffeological space. Moreover,  the singular de Rham complex enables us to construct 
the Leray--Serre spectral sequence and the 
Eilenberg--Moore spectral sequence in the diffeological setting; see \cite[Theorems 5.4 and 5.5]{K_2019}. 
Furthermore, there exists a natural morphism $\alpha : \Omega(X) \to A(X)$ of differential graded algebras 
from the original de Rham complex $\Omega(X)$ due to Souriau to the new one $A(X)$ 
such that the integration map from $\Omega(X)$ to the cubic cochain complex of $X$ introduced in \cite[Chapter 6]{IZ} factors through 
$\alpha$ up to chain homotopy. Thus the map $\alpha$ is called the {\it factor map}. It is important to mention that the idea of cubic differential forms on a diffeological space in \cite[Definition 4.1]{I-I} is a starting point for our consideration of diffeological de Rham theory. 

The result \cite[Theorem 2.4]{K_2019} asserts that the factor map is a quasi-isomorphism of cochain algebras 
if $X$ is a manifold, a finite dimensional smooth CW complex or a parametrized stratifold; see \cite{I, I-I} and \cite{Kreck} for a smooth CW complex and 
a stratifold, respectively. 
Moreover, the factor map $\alpha$ induces a monomorphism $H(\alpha) : H^1(\Omega(X)) \to H^1(A(X))$ 
for {\it every} diffeological space $X$; see \cite[Proposition 6.11]{K_2019}. 
We are interested in a geometric interpretation of the difference between the two de Rham cohomology groups. 

The aim of this manuscript is to compare the first de Rham cohomology groups 
for the complexes $A(X)$ and $\Omega(X)$ within the \v{C}ech--de Rham spectral sequence \cite{IZ_2019} by means of 
the factor map $\alpha$; see the paragraph before Theorem \ref{thm:main} for details. 
In particular, it is shown that the first singular de Rham cohomology for the irrational torus $T_\theta$ is isomorphic to the direct sum of the original one and the group of equivalence classes of flow bundles over $T_\theta$ with connection $1$-forms; see Corollary \ref{cor:main}. 
As a consequence, we see that, as an algebra, the singular de Rham cohomology $H^*(A(T_\theta))$ is isomorphic to the tensor product of the original de Rham cohomology and the exterior algebra generated by a flow bundle over $T_\theta$; see Corollary \ref{cor:cor2}. 

In the following remark, we compare the irrational torus $T_\theta$ and the two dimensional torus 
${\mathbb T}^2$ from homotopical and homological points of view in diffeology. 

\begin{rem}
There exists a diffeological bundle of the form ${\mathbb R} \to {\mathbb T}^2 \stackrel{p}{\to} T_\theta$ whose fibre is contractible; see \cite[Chapter 8]{IZ}. 
It follows from the smooth homotopy exact sequence of the bundle that 
the projection $p$ induces isomorphisms 
\[
\pi_1^D(T_\theta) \cong \pi_1^D({\mathbb T}^2)\cong \pi_1({\mathbb T}^2) \cong{\mathbb Z}^{\oplus 2} \ \  \text{and} \  \ 
\pi_i^D(T_\theta) \cong \pi_i^D({\mathbb T}^2) \cong \pi_i({\mathbb T}^2)= 0
\] 
for $i\ge 2$. Here $\pi_i^D( \ )$ and $\pi_i( \ )$ denote the smooth homotopy group functor and the usual homotopy functor, respectively; see \cite[Chapter 5]{IZ} and \cite[3.1]{C-W} for the smooth homotopy group. 
However, the two tori are not homotopy equivalent to each other. This follows from the result that 
the original de Rham cohomology is a homotopy invariant for diffeological  spaces. In fact,  the de Rham cohomology groups of $T_\theta$ and ${\mathbb T}^2$  are not isomorphic to each other; see \cite[6.88]{IZ}. We observe that $H^1(\Omega(T_\theta)) \cong {\mathbb R}$; see \cite[Exercise 119]{IZ}. 

On the other hand, the singular de Rham cohomology $H^*(A(T_\theta))$ is isomorphic to $H^*(A({\mathbb T}^2))$ as an algebra; 
see \cite[Remark 2.9]{K_2019}. We stress that a non-trivial flow bundle is in $H^*(A(T_\theta))$ as mentioned above but not in $H^*(A({\mathbb T}^2))$. In fact,  each flow bundle over a manifold is trivial because the fibre ${\mathbb R}$ is contractible and hence the bundle has a smooth section; 
see \cite[6.7 Theorem]{Steenrod}. 
\end{rem}

In a more general setting, the singular de Rham complex connects with the polynomial de Rham complex 
via quasi-isomorphisms; see \cite[Corollary 3.5]{K_2019}. 
Thus one might expect that 
rational (real) homotopy theory for non-simply connected spaces (simplicial sets), for example \cite{B-Z, G-H-T, M}, works well 
in developing the de Rham calculus for diffeological spaces.  We will pursue the topic in future work. 

An outline for the article is as follows. In Section 2, we describe our main theorem, Theorem \ref{thm:main}, and its corollaries for the irrational torus.  
Section 3 is devoted to proving the results. Section 4 deals with the injectivity of the edge map of 
the \v{C}ech--de Rham spectral sequence. 

\section{The main theorem}

We begin by recalling the definition of a diffeological space. 

\begin{defn}
For a set $X$,  a set  $\D^X$ of functions $U \to X$ for each open set $U$ in ${\mathbb R}^n$ and for each $n \in {\mathbb N}$ 
is a {\it diffeology} of $X$ if the following three conditions hold: 
\begin{enumerate}
\item (Covering) Every constant map $U \to X$ for all open set $U \subset {\mathbb R}^n$ is in $\D^X$;
\item (Compatibility) If $U \to X$ is in $\D^X$, then for any smooth map $V \to U$ from an open set $V  \subset {\mathbb R}^m$, the composite 
$V \to U \to X$ is also in $\D^X$; 
\item
(Locality) If $U = \cup_i U_i$ is an open cover and $U \to X$ is a map such that each restriction $U_i \to X$ is in $\D^X$, 
then the map $U \to X$ is in $\D^X$. 
\end{enumerate}
\end{defn}

A pair $(X, \D^X)$ consisting of a set and a diffeology is called a {\it diffeological space}. We call an element of a diffeology $\D^X$ a {\it plot}. 
Let $(X, \D^X)$  be a diffeological space and $A$ a subset of $X$. The {\it sub-diffeology} $\D^A$ on $A$ is defined by the initial diffeology for the inclusion 
$i : A \to X$; that is, $p\in \D^A$ if and only if $i\circ p \in \D^X$. 

For a manifold $M$, let $\D_M$ be the set of all smooth maps from open subsets of Euclidean spaces to $M$. It is readily seen that $\D_M$ is a diffeology of $M$. We call it the {\it standard diffeology} of $M$. 

\begin{defn}
Let $(X, \D^X)$ and $(Y, \D^Y)$ be diffeological spaces. A map $f: X \to Y$ is {\it smooth} if for any plot $p \in \D^X$, the composite 
$f\circ p$ is in $\D^Y$. 
\end{defn}

The original de Rham complex due to Souriau is recalled. Let $(X, \D^X)$ be a diffeological space. 
For an open subset $U$ of ${\mathbb R}^n$, let $\D^X(U)$ be the set of plots with $U$ as the domain and 
$\Lambda^*(U) = \{h : U \longrightarrow \wedge^*(\oplus_{i=1}^{n} {\mathbb R}dx_i ) \mid h \ \text{is smooth}\}$
the usual de Rham complex of the manifold $U$.  Let $\mathsf{Open}$ denote the category consisting of open subsets of Euclidean spaces and smooth maps between them.  
We can regard $\D^X( \ )$ and $\Lambda^*( \ )$  as functors from $\mathsf{Open}^{\text{op}}$ to $\mathsf{Sets}$ the category of sets. 

A $p$-{\it form} is a natural transformation from $\D^X( \ )$ to $\Lambda^*( \ )$. Then the de Rham complex $\Omega(X)$ is the 
cochain algebra of $p$-forms for $p\geq 0$; that is, $\Omega(X)$ is the direct sum of the modules
\[
\Omega^p(X) := \Set{
\xymatrix@C35pt@R10pt{
\mathsf{Open}^{\text{op}} \rtwocell^{\D^X}_{\Lambda^p}{\hspace*{0.2cm}\omega} & 
\mathsf{Sets} }
| \omega \ \text{is a natural transformation}
}
\]
with the cochain algebra structure defined by that of $\Lambda^*(U)$ pointwise.   

We introduce another de Rham complex for a diffeological space, which is called the singular de Rham complex. 
Let ${\mathbb A}^{n}:=\{(x_0, ..., x_n) \in {\mathbb R}^{n+1} \mid \sum_{i=0}^n x_i = 1 \}$ 
be the affine space equipped with the sub-diffeology of ${\mathbb R}^{n+1}$ 
and $(A_{DR}^*)_\bullet$ the simplicial cochain algebra 
defined by $(A^*_{DR})_n := \Omega^*({\mathbb A}^{n})$ for each $n\geq 0$. Here we regard ${\mathbb R}^{n+1}$ as a diffeological space endowed with the 
standard diffeology. 
For a diffeological space $(X, \D^X)$, let $S^D_\bullet(X)$ denote the simplicial set defined by 
\[
S^D_\bullet(X):= \{ \{ \sigma : {\mathbb A}^n \to X \mid \sigma \ \text{is a $C^\infty$-map} \} \}_{n\geq 0}. 
\]
The simplicial set and the simplicial cochain algebra $(A_{DR}^*)_\bullet$ give rise to a cochain algebra  
\[
\mathsf{Sets^{\Delta^{op}}}(S^D_\bullet(X), (A_{DR}^*)_\bullet):= \Set{
\xymatrix@C35pt@R10pt{
\mathsf{\Delta}^{\text{op}} \rtwocell^{S^D_\bullet(X)}_{(A_{DR}^*)_\bullet}{\hspace*{0.2cm}\omega} & 
\mathsf{Sets} }
| \omega \ \text{is a natural transformation}
}
\]
whose cochain algebra structure is defined by that of $(A_{DR}^*)_\bullet$. 
In what follows, we call the complex $A(X):=\mathsf{Sets^{\Delta^{op}}}(S^D_\bullet(X), (A_{DR}^*)_\bullet)$ 
the {\it singular de Rham complex} of $X$; see \cite[Section 2]{K_2019} for fundamental properties of the cochain algebra. 
Observe that 
the complex $A(X)$ is a variant of the cubic de Rham complex in \cite{I-I}. 

We recall the factor map   
$\alpha : \Omega (X) \to A(X)$ defined by $\alpha(\omega)(\sigma) = \sigma^*(\omega)$ which is natural with respect to smooth maps between diffeological spaces; see \cite[Section 3.2]{K_2019}. 
As mentioned in the Introduction, if $X$ is a manifold, then the factor map is a quasi-isomorphism. 

In order to describe our results, we further recall a generating family, a nebula, a gauge monoid and the \v{C}ech--de Rham  spectral sequence introduced by 
Iglesias-Zemmour in \cite{IZ, IZ_2019}. 
 
A subset ${\mathcal G}_X$ of a diffeology of $X$ is a {\it generating family} of the diffeology if for any plot $p : U \to X$ and $r \in U$, there exists an open neighborhood $V$ of $r$ such that the restriction $P|_V$ is a constant map or $P|_V = F\circ Q$ for some $F : W \to X$ in ${\mathcal G}_X$ and some smooth map $Q : V \to W$; see \cite[1.68]{IZ}. 

Let $(X, \D^X)$ be a diffeological space. 
Let ${\mathcal G}_X$ be the generating family of $\D^X$ consisting of all plots whose domains are open balls in Euclidean spaces. 
We assume that ${\mathcal G}_X$ contains the set $C^\infty({\mathbb R}^0, X)$; see \cite[1.76]{IZ}.  
Then we define the {\it nebula} ${\mathcal N}_X$ of $X$ associated with ${\mathcal G}_X$ to be the diffeological space 
\[
{\mathcal N}_X := \coprod_{\varphi \in {\mathcal G}_X} \big(\{\varphi\} \times \text{dom}(\varphi)\big)
\]
endowed with the sum diffeology, where $\text{dom}(\varphi)$ denotes the domain of the plot $\varphi$. 
We may write ${\mathcal N}({\mathcal G}_X)$ for ${\mathcal N}_X$ when expressing the generating family. 
It is readily seen that the evaluation map $ev : {\mathcal N}_X \to X$ defined by $ev(\varphi, r)=\varphi(r)$ is smooth. 
The {\it gauge monoid} $\mathsf{M}_X$ is a submonoid of the monoid of endomorphisms on the nebula ${\mathcal N}_X$ defined by 
\[
\mathsf{M}_X :=\{f \in C^\infty({\mathcal N}_X, {\mathcal N}_X) \mid ev\circ f = ev \ \text{and} \ 
\sharp \ \! \text{Supp} \ \!  f < \infty \},
\] 
where $\text{Supp} \  \!f := \{ \varphi \in {\mathcal G} \mid f|_{\{\varphi \} \times \text{dom}(\varphi)} \neq 1_{\{\varphi \} \times \text{dom}(\varphi)} \}$. 
In what follows, we denote the monoid $\mathsf{M}_X$ by $\mathsf{M}$ if the underlying diffeological space is clear from the context. 

The original de Rham complex $\Omega^*({\mathcal N}_X)$ is a left $\mathsf{M}^{\text{op}}$-module whose actions are defined by $f^*$ induced by endomorphisms $f \in {\mathcal N}_X$.  Moreover, the complex $\Omega({\mathcal N}_X)$ is regarded as a two sided $\mathsf{M}^{\text{op}}$-module for which the right module structure is trivial.
Then we have the Hochschild complex $C^{*,*}=\{C^{p,q}, \delta, d_\Omega\}_{p,q \geq 0}$ with 
\[
C^{p,q} = 
\text{Hom}_{\K\mathsf{M}^{\text{op}}\otimes\K\mathsf{M}}
(\K\mathsf{M}^{\text{op}}\otimes (\K\mathsf{M}^{\text{op}})^{\otimes p}\otimes\K\mathsf{M}, \Omega^q({\mathcal N}_X))\cong 
\text{map}(\mathsf{M}^p, \Omega^q({\mathcal N}_X)),
\]
where the horizontal map $\delta$ is the Hochshcild differential and the vertical map $d_\Omega$ is 
induced by the de Rham differential on $\Omega^*({\mathcal N}_X)$; see \cite[Subsection 8]{IZ_2019}. 
The horizontal filtration $F^* = \{F^j\}_{j\geq 0}$ defined by $F^j = \oplus_{q\geq j}C^{*,q}$ of the 
the total complex $\text{Tot} \ C^{*,*}$ gives rise to 
a first quadrant spectral sequence $\{{}_\Omega E_r^{*, *}, d_r\}$ converging to the \v{C}ech cohomology 
$\check{H}(X):=HH^*(\K\mathsf{M}, \text{map}({\mathcal G}, \K))$ with 
\[
E_2^{p,q} \cong H^q( HH^p(\K\mathsf{M}^{\text{op}}, \Omega({\mathcal N}_X)), d_\Omega), 
\]
where $HH^*( \text{-} )$ denotes the Hochschild cohomology; see \cite[Subsections 9 and 16]{IZ_2019}.  Observe that 
the differential $d_r$ is of bidegree $(1-r, r)$. 
This spectral sequence is called the {\it \v{C}ech--de Rham  spectral sequence}; see \cite{IZ_2019}. 

The same construction as that of the spectral sequence above is applicable to the singular de Rham complex $A(X)$. Then replacing the original de Rham complex $\Omega(\text{-})$ with $A( \text{-} )$, we have a spectral sequence 
$\{{}_A E_r^{*, *}, d_r\}$. The Poincar\'e lemma for 
the complex $A(\text{-})$ holds; see \cite[Theorem 2.4]{K_2019}. Then it follows that the target of the spectral sequence for $A(X)$ is also the \v{C}ech cohomology $\check{H}(X)$. 
Thus the naturality of the factor map $\alpha : A(X) \to \Omega(X)$ gives rise to a commutative diagram of isomorphisms 
\[
\xymatrix@C40pt@R12pt{
H^1(\Omega(X))\oplus {}_\Omega E_3^{1,0} \ar[rr]^{\Theta}_{\cong} &  & H^1(A({\mathcal N}_X)^{\mathsf{M}})\oplus {}_A E_3^{1,0} \\
& \check{H}^1(X; \R). \ar[ur]_{\text{edge}_2}^{\cong} \ar[ul]^{\text{edge}_2}_{\cong}
}
\]
In fact, the edge homomorphism $\text{edge}_1 :=ev^* : H^*(\Omega(X)) \to {}_\Omega E_2^{0,*}=H^1(\Omega({\mathcal N}_X)^{\mathsf{M}})$ 
induced by the evaluation map $ev : X \to {\mathcal N}_X$ is an isomorphism; see \cite[6. Proposition]{IZ_2019}. Moreover, the morphism 
$\alpha : \Omega(X) \to A(X)$ of cochain algebras induces a map $H(\text{Tot}(\alpha))$ between the total complexes which define the spectral sequences above. Thus the naturality of the  map $\alpha$ enables us to obtain a commutative diagram 
\[
 \xymatrix@C15pt@R2pt{
H^*(\Omega(X)) \ar[dd]_-{H(\alpha)}\ar[r]^-{ev^*}_-{\cong}& H^*(\Omega({\mathcal N}_X)^{\mathsf{M}})  = {}_\Omega E_2^{0, *} \ar@{->>}[r] \ar[dd]_{f(\alpha)_2}&
   {}_\Omega E_\infty^{0, *} \ar@{>->}[r]  \ar[dd]_{f(\alpha)_\infty} & H^*(\text{Tot} \ C^{*,*})  \ar[dd]_{H^*(\text{Tot}(\alpha))} &\\
&  & & &  \check{H}^*(X). \ar[lu]_-{edge_2}^-{\cong}  \ar[ld]^-{edge_2}_-{\cong} \\
H^*(A(X)) \ar[r]^-{ev^*} &H^*(A({\mathcal N}_X)^{\mathsf{M}}) =   {}_A E_2^{0, *} \ar@{->>}[r] 
 &  {}_AE_\infty^{0, *} \ar@{>->}[r] & H^*(\text{Tot} \ 'C^{*,*}) & 
 }
  \eqnlabel{add-1}
\] 

By degree reasons, we see that the surjective maps  ${}_KE_2^{0,1}\to {}_KE_\infty^{0,1}$ are isomorphisms and ${}_KE_3^{1,0}\cong {}_KE_\infty^{1,0}$ for $K=\Omega$ and $A$. 
Thus the map $H^*(\text{Tot}(\alpha))$ yields the homomorphism $\Theta$ which fits in the triangle. As a consequence, we see that the map 
$\Theta$ is an isomorphism. 
Furthermore, the diagram (2.1) allows us to conclude that the map $H^1(\alpha) : H^1(\Omega(X)) \to H^1(A(X))$ is injective; 
see the paragraph after \cite[Proposition 6.12]{K_2019}. 

In a particular case where a diffeological space $X$ appears as the base space of a diffeological bundle (see \cite[Chapter 8]{IZ}), 
we consider the injectivity of the edge homomorphism 
$\text{edge}_1^i:=(ev^*)^i : H^i(A(X)) \to H^i(A({\mathcal N}_X)^{\mathsf{M}})= {}_AE_2^{0,i}(X)$ 
for $i = 1, 2$ in order to 
relate $H^*(\Omega(X))$ to $H^*(A(X))$ in the \v{C}ech--de Rham spectral sequence with the diagram (2.1). 
We observe that 
the restriction of the map $\Theta$ mentioned above to $H^1(\Omega(X))$ is the composite of the monomorphism 
$H(\alpha) : H^1(\Omega(X)) \to H^1(A(X))$ and the map $\text{edge}_1$. This follows from 
the commutativity of the left square in the diagram (2.1).

\begin{thm} \label{thm:main} 
Let $X$ be a connected diffeological space which admits a diffeological bundle of the form $F \to M \stackrel{\pi}{\to} X$ in which $M$ is a connected manifold and 
$F$ is connected diffeological space. Then 
{\em (1)} the edge homomorphism $\text{\em edge}_1^1$
is injective, and 
{\em (2)}  the dimension of the kernel of $\text{\em edge}_1^2$ is less than or equal to $\dim H^1(A(F))$. 
%
\end{thm}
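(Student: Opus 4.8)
The plan is to deduce both assertions from the behaviour of the homomorphism $\pi^{*}\colon H^{i}(A(X))\to H^{i}(A(M))$ induced by the bundle projection. The first ingredient is that, for the \emph{manifold} $M$, the edge homomorphism $ev^{*}\colon H^{i}(A(M))\to {}_{A}E_{2}^{0,i}(M)$ is an isomorphism for every $i$. This is the analogue for $A(\text{-})$ of \cite[6.~Proposition]{IZ_2019}, valid because $A(\text{-})$ satisfies the Poincar\'e lemma \cite[Theorem 2.4]{K_2019} and because the factor map $\alpha$ is a quasi-isomorphism on every manifold --- including the nebula $\mathcal{N}_{M}$, which is a disjoint union of open balls and points; chasing the left-hand square of (2.1), in which the $\Omega$-edge homomorphism and $H(\alpha_{M})$ are already known to be isomorphisms, yields the claim.

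The second ingredient is the naturality of the \v{C}ech--de Rham spectral sequence, in particular of the edge homomorphism $ev^{*}$, with respect to the smooth map $\pi$: it yields for each $i$ a commutative square
\[
\xymatrix@C26pt@R14pt{
H^{i}(A(X))\ar[r]^-{ev^{*}}\ar[d]_-{\pi^{*}} & {}_{A}E_{2}^{0,i}(X)\ar[d]\\
H^{i}(A(M))\ar[r]^-{ev^{*}}_-{\cong} & {}_{A}E_{2}^{0,i}(M).
}
\]
Since the lower arrow is an isomorphism, a diagram chase gives $\ker(\text{edge}_1^{i})\subseteq\ker\bigl(\pi^{*}\colon H^{i}(A(X))\to H^{i}(A(M))\bigr)$ for $i=1,2$, so it suffices to bound the kernel of $\pi^{*}$ in those degrees.

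The third ingredient is the Leray--Serre spectral sequence $\{E_{r}^{p,q},d_{r}\}$ for $A(\text{-})$ attached to $F\to M\stackrel{\pi}{\to}X$; see \cite[Theorem 5.4]{K_2019}. Since $F$ is connected, $H^{0}(A(F))\cong\R$ with trivial coefficient action, so $E_{2}^{p,0}\cong H^{p}(A(X))$ and the composite $E_{2}^{p,0}\twoheadrightarrow E_{\infty}^{p,0}\hookrightarrow H^{p}(A(M))$ is $\pi^{*}$. In total degree $1$ no nonzero differential enters or leaves $E_{r}^{1,0}$, so $E_{2}^{1,0}\cong E_{\infty}^{1,0}\hookrightarrow H^{1}(A(M))$; hence $\pi^{*}$ is injective in degree $1$, so $\text{edge}_1^{1}$ is injective, which proves (1). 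In total degree $2$ the only relevant differential is $d_{2}\colon E_{2}^{0,1}\to E_{2}^{2,0}$, and $E_{3}^{2,0}\cong E_{\infty}^{2,0}$, so $\ker\bigl(\pi^{*}\colon H^{2}(A(X))\to H^{2}(A(M))\bigr)$ is the image of this $d_{2}$. As $X$ is connected, $E_{2}^{0,1}$ is isomorphic to a subspace of $H^{1}(A(F))$, whence
\[
\dim\ker(\text{edge}_1^{2})\;\le\;\dim\ker\pi^{*}\;\le\;\dim E_{2}^{0,1}\;\le\;\dim H^{1}(A(F)),
\]
which proves (2).

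The step that requires genuine work is the naturality invoked in the second ingredient: the gauge monoids $\mathsf{M}_{X}$, $\mathsf{M}_{M}$ and the nebulae $\mathcal{N}_{X}$, $\mathcal{N}_{M}$ are built from unrelated generating families, so producing a morphism from the \v{C}ech--de Rham spectral sequence of $X$ to that of $M$ is not purely formal. The point is that a diffeological bundle projection is a subduction: composing a generating plot $\psi$ of $M$ with $\pi$ gives a generating plot of $X$, which already produces a map $\mathcal{N}_{M}\to\mathcal{N}_{X}$, $(\psi,r)\mapsto(\pi\circ\psi,r)$, over the evaluation maps. This map need not intertwine the gauge actions, and it is the local triviality of the bundle that lets one pass to a generating family of $X$ adapted to $\pi$ --- one over which $M$ is trivial --- and thereby obtain the wanted map of spectral sequences, the \v{C}ech--de Rham spectral sequence being invariant under change of generating family; the identification of the Leray--Serre edge map with $\pi^{*}$ is part of the construction in \cite[Section 5]{K_2019}.
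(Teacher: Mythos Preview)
Your overall architecture matches the paper's: set up the square relating $\text{edge}_1$ for $X$ and $M$ via $\pi^*$, control $\pi^*$ with the Leray--Serre spectral sequence, and use that $ev^*$ is well-behaved on the manifold $M$. The Leray--Serre analysis in your third ingredient is exactly what the paper does in its step (I), and the naturality discussion in your last paragraph corresponds to Lemma~\ref{lem:Subduction} (the paper builds $\mathcal{N}_X$ from $\pi_*\mathcal{G}_M$ rather than invoking invariance under change of generating family, but the idea is the same).

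There is, however, a genuine gap in your first ingredient. You assert that $ev^*\colon H^i(A(M))\to {}_AE_2^{0,i}(M)$ is an isomorphism for every $i$, and offer two reasons: (a) the Poincar\'e lemma for $A$ yields an $A$-analogue of \cite[6.~Proposition]{IZ_2019}; (b) $\alpha$ is a quasi-isomorphism on the nebula $\mathcal{N}_M$, so one can chase the left square of (2.1). Neither argument works as stated. For (b): knowing that the top and left arrows of a commutative square are isomorphisms tells you nothing about the bottom and right arrows individually, and the fact that $\alpha$ is a quasi-isomorphism on $\mathcal{N}_M$ concerns $\Omega(\mathcal{N}_M)\to A(\mathcal{N}_M)$, not the map on $\mathsf{M}$-invariants $\Omega(\mathcal{N}_M)^{\mathsf{M}}\to A(\mathcal{N}_M)^{\mathsf{M}}$; taking invariants is not exact, so a quasi-isomorphism need not descend. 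For (a): the isomorphism $ev^*\colon\Omega(X)\stackrel{\cong}{\to}\Omega(\mathcal{N}_X)^{\mathsf{M}}$ holds at the level of complexes because a Souriau form is \emph{by definition} a compatible family of forms on plots; there is no such tautology for $A$, since an element of $A(X)$ is indexed by affine simplices in $X$, which need not lift to the nebula. The Poincar\'e lemma alone does not bridge this.

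Fortunately you only need \emph{injectivity} of $ev^*$ on $M$ in degrees $1$ and $2$, and this the paper obtains by a different route (its step (II)): since $M$ is a manifold, every flow bundle over $M$ is trivial, hence ${}_\Omega E_2^{1,0}(M)=0$; therefore no differential hits ${}_\Omega E_2^{0,1}$ or ${}_\Omega E_2^{0,2}$, so ${}_\Omega E_2^{0,i}\to{}_\Omega E_\infty^{0,i}$ is an isomorphism for $i=1,2$. Because both spectral sequences converge to $\check H^*(M)$ and $f(\alpha)_\infty$ is an isomorphism, the commutative square with ${}_A E_2^{0,i}\twoheadrightarrow{}_A E_\infty^{0,i}$ forces $f(\alpha)_2$ to be injective in these degrees, and hence so is $ev^*_A$. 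Replacing your first ingredient with this argument closes the gap; the rest of your proof then goes through unchanged.
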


\begin{ex}
Let $G$ be a diffeological group (see \cite[Chapter 7]{IZ}) and $H$ a subgroup of $G$ with the sub-diffeology. Then we have a diffeological bundle of the form $H \to G \stackrel{\pi}{\to} G/H$, where $\pi$ is the canonical projection and $G/H$ is endowed with the quotient diffeology. Thus if $G$ is a Lie group and $H$ is a connected subgroup which is not necessarily closed, then the bundle 
$\pi : G \to G/H$ with fibre $H$ satisfies the condition in Theorem \ref{thm:main}. 
Assume further that $H^1(A(H))= 0$. By virtue of Theorem \ref{thm:main}, we see that the map $\text{edge}_1^i$ is injective for $i= 1$ and $2$. 
\end{ex}

Before describing corollaries, we recall results on principal ${\mathbb R}$-bundles (flow bundles) in \cite{IZ_2019}.  
For a diffeological space $X$, we consider a Hochschild cocycle 
$\tau : \mathsf{M} \to \Omega^0({\mathcal N}_X)=C^\infty({\mathcal N}_X, {\mathbb R})$ in 
$\text{Ker} \{\delta : C^{1,0} \to C^{2,0}\}$. Then an $\mathsf{M}$-action $A_\tau$ 
on ${\mathcal N}_X \times {\mathbb R}$ is defined by $A_\tau(b, s) = (A(b), s+\tau(A)(b))$. 
The action gives rise to a principal ${\mathbb R}$-bundle of the form $Y_\tau:={\mathcal N}_X\times_\tau {\mathbb R} \to {\mathcal N}_X/ \mathsf{M}\cong {\mathcal N}_X/ev \ {\cong} \ \! X$ over $X$, where $Y_\tau$ is the quotient space of ${\mathcal N}_X\times {\mathbb R}$ by the $\mathsf{M}$-action; 
see \cite[1.76]{IZ}.  More precisely, the equivalence relation is generated by the binary relation which the $\mathsf{M}$-action $A_\tau$ induces. Observe that the second diffeomorphism is given by the evaluation map 
$ev : {\mathcal N}_X \to X$. 

Let $\text{Fl}(X)$ be the abelian group of equivalence classes of flow bundles. The sum is given by the quotient of the direct sum of two flow bundles by 
the anti-diagonal action of ${\mathbb R}$; see \cite[Proposition 2]{IZ_2019}. 
Then the map 
${}_\Omega E_1^{1,0} \to \text{Fl}(X)$ defined by assigning the equivalence class of the flow bundle $Y_\tau \to X$ to $[\tau]$ is an isomorphism. 
Moreover, we see that ${}_\Omega E_2^{1,0} = \text{Ker} \{d_\Omega : {}_\Omega E_1^{1,0} \to {}_\Omega E_1^{1,1}\}$ is isomorphic to 
$\text{Fl}^{\tiny{\bullet}}(X)$ the subgroup of $\text{Fl}(X)$ consisting of all equivalence classes of flow bundles over $X$ with {\it connection $1$-forms}; see  \cite[8.37]{IZ}. 

Thanks to the injectivity of the edge homomorphism in Theorem \ref{thm:main} and a result on flow bundles mentioned above, we have 

\begin{cor} \label{cor:main} Let $T_\theta$ be the irrational torus. Then the map $\Theta$ in Theorem \ref{thm:main} 
gives rise to an isomorphism 
$\Theta : H^1(\Omega(T_\theta)) \oplus \text{\em Fl}^{\tiny{\bullet}}(T_\theta) \stackrel{\cong}{\to} H^1(A(T_\theta))$.
\end{cor}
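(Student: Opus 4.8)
The plan is to apply Theorem \ref{thm:main} to the specific diffeological bundle over $T_\theta$ and then identify the two summands on the source of $\Theta$. First I would invoke the diffeological bundle ${\mathbb R} \to {\mathbb T}^2 \stackrel{p}{\to} T_\theta$ recalled in the Remark of the Introduction, whose fibre $F = {\mathbb R}$ is a connected manifold, whose total space ${\mathbb T}^2$ is a connected manifold, and whose base $T_\theta$ is a connected diffeological space. Thus the hypotheses of Theorem \ref{thm:main} are satisfied, so the edge homomorphism $\text{edge}_1^1 : H^1(A(T_\theta)) \to H^1(A({\mathcal N}_{T_\theta})^{\mathsf{M}}) = {}_A E_2^{0,1}(T_\theta)$ is injective. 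Since $F = {\mathbb R}$ is smoothly contractible, $H^1(A({\mathbb R})) = 0$ by the Poincaré lemma for $A(\text{-})$ (\cite[Theorem 2.4]{K_2019}); while part (2) of Theorem \ref{thm:main} is not strictly needed here, it confirms that $\text{edge}_1^2$ is also injective in this case, which is consistent with the whole of $H^1(A(T_\theta))$ lying in the image of the edge map.

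The second step is to read off, in the commutative diagram (2.1) and the triangle preceding Theorem \ref{thm:main}, that the isomorphism
\[
\Theta : H^1(\Omega(T_\theta)) \oplus {}_\Omega E_3^{1,0} \stackrel{\cong}{\longrightarrow} H^1(A({\mathcal N}_{T_\theta})^{\mathsf{M}}) \oplus {}_A E_3^{1,0}
\]
has the property, noted in the paragraph before the theorem, that its restriction to $H^1(\Omega(T_\theta))$ is $\text{edge}_1 \circ H^1(\alpha)$. Because $\text{edge}_1^1$ is now known to be injective (indeed an isomorphism onto ${}_AE_2^{0,1} \cong {}_AE_\infty^{0,1}$ by the degree-reason argument already in the text), the target $H^1(A({\mathcal N}_{T_\theta})^{\mathsf{M}}) \oplus {}_A E_3^{1,0}$ is identified via $\text{edge}_2^{-1}$ with $H^1(A(T_\theta))$, and under this identification $\Theta$ becomes the asserted map $H^1(\Omega(T_\theta)) \oplus {}_\Omega E_3^{1,0} \to H^1(A(T_\theta))$.

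The third step identifies the summand ${}_\Omega E_3^{1,0}$ with $\text{Fl}^{\bullet}(T_\theta)$. By the results on flow bundles recalled just before the corollary, ${}_\Omega E_2^{1,0} = \text{Ker}\{d_\Omega : {}_\Omega E_1^{1,0} \to {}_\Omega E_1^{1,1}\} \cong \text{Fl}^{\bullet}(T_\theta)$. It then remains to check that the differential $d_2 : {}_\Omega E_2^{1,0} \to {}_\Omega E_2^{-1,1}$ is zero (automatic, since ${}_\Omega E_2^{-1,1} = 0$ in a first quadrant spectral sequence) and that the incoming differential $d_2 : {}_\Omega E_2^{3,-1} \to {}_\Omega E_2^{1,0}$ is zero (again automatic, as ${}_\Omega E_2^{3,-1}=0$), whence ${}_\Omega E_3^{1,0} = {}_\Omega E_2^{1,0} \cong \text{Fl}^{\bullet}(T_\theta)$. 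Combining the three steps gives the isomorphism $\Theta : H^1(\Omega(T_\theta)) \oplus \text{Fl}^{\bullet}(T_\theta) \stackrel{\cong}{\to} H^1(A(T_\theta))$.

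The main obstacle is none of the three steps individually but rather bookkeeping: one must verify that the identification of the target of $\Theta$ with $H^1(A(T_\theta))$ via $\text{edge}_2$ is compatible with the edge homomorphism $\text{edge}_1^1$ appearing in Theorem \ref{thm:main}, i.e. that the composite of $\text{edge}_1^1$ with the surjection ${}_AE_2^{0,1}\twoheadrightarrow {}_AE_\infty^{0,1}$ and the inclusion ${}_AE_\infty^{0,1}\hookrightarrow H^1(\text{Tot}\ 'C^{*,*})$ is $\text{edge}_2^{-1}$ followed by nothing — this is exactly the content of the bottom row of diagram (2.1) together with the degree-reason isomorphisms, so it is already in place, but it must be cited carefully. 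The genuinely new input beyond \cite{K_2019} and \cite{IZ_2019} is the injectivity statement of Theorem \ref{thm:main}(1), which is what upgrades the monomorphism $H^1(\alpha)$ to the full direct-sum decomposition.
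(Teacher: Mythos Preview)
Your outline has two genuine gaps, both in the third step and in the passage from the triangle to the asserted target $H^1(A(T_\theta))$.

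First, the differential $d_r$ in the \v{C}ech--de Rham spectral sequence has bidegree $(1-r,r)$, as stated in the paper, so $d_2 : {}_\Omega E_2^{1,0} \to {}_\Omega E_2^{0,2}$, not to ${}_\Omega E_2^{-1,1}$. The target ${}_\Omega E_2^{0,2}$ does \emph{not} vanish for first-quadrant reasons; it is isomorphic via $ev^*$ to $H^2(\Omega(T_\theta))$. The paper kills this $d_2$ by invoking the specific computation $H^2(\Omega(T_\theta))=0$ from \cite[Exercise 119]{IZ}. Your ``automatic'' vanishing argument is therefore incorrect, and without it you have not established ${}_\Omega E_3^{1,0}\cong {}_\Omega E_2^{1,0}\cong \text{Fl}^{\bullet}(T_\theta)$.

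Second, and more seriously, you have not shown that the target of $\Theta$ can be identified with $H^1(A(T_\theta))$. Theorem \ref{thm:main}(1) gives only that $\text{edge}_1^1$ is \emph{injective}; you assert in passing that it is ``indeed an isomorphism onto ${}_AE_2^{0,1}$'', but no argument for surjectivity is given. Equivalently, you have not shown ${}_AE_3^{1,0}(T_\theta)=0$, and without this the right-hand side of the triangle is only $\check{H}^1(T_\theta)$, not $H^1(A(T_\theta))$. The paper closes this gap by a dimension count: it computes $H^1(A(T_\theta))\cong {\mathbb R}^{\oplus 2}$ via the Leray--Serre spectral sequence of ${\mathbb R}\to {\mathbb T}^2\to T_\theta$, and $\check{H}^1(T_\theta)\cong H^1({\mathbb Z}+\theta{\mathbb Z};{\mathbb R})\cong {\mathbb R}^{\oplus 2}$ via Proposition \ref{prop:C-G}; injectivity of $\text{edge}_1^1$ then forces ${}_AE_3^{1,0}=0$. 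Both of these explicit computations are missing from your proposal and are essential to the argument.
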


We recall the diffeomorphism $\psi : {\mathbb R}/({\mathbb Z}+\theta {\mathbb Z}) \to T_\theta$ defined by 
$\psi(t) = (0, e^{2\pi i t})$ in \cite[Exercise 31, 3)]{IZ}. Then there exist isomorphisms 
$\Omega(T_\theta) \cong \Omega ({\mathbb R}/({\mathbb Z}+\theta {\mathbb Z}) ) \cong (\wedge^*({\mathbb R}), d \equiv 0)$ which are induced by $\psi$ 
and the subduction ${\mathbb R} \to {\mathbb R}/({\mathbb Z}+\theta {\mathbb Z})$, respectively; see \cite[Exercise 119]{IZ}.
On the other hand, we see that $H^*(A(T_\theta)) \cong \wedge (t_1, t_2)$ as an algebra, where $\deg t_i = 1$; see the proof of Corollary \ref{cor:main}.
Thus the corollary above yields the following result. 

\begin{cor}\label{cor:cor2}
There exists an isomorphism $H^*(A(T_\theta)) \cong \wedge (\Theta(t), \Theta(\xi))$ 
of algebras, where $t \in H^*(\Omega(T_\theta))\cong \wedge(t)$ is a generator and 
$\xi \in \text{\em Fl}^{\tiny{\bullet}}(T_\theta) \cong {\mathbb R}$ is a flow bundle over $T_\theta$ with a connection $1$-form, 
which is a generator of the group $\text{\em Fl}^{\tiny{\bullet}}(T_\theta)$. 
\end{cor}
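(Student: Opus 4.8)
The plan is to combine Corollary~\ref{cor:main} with the algebra structure of $H^*(A(T_\theta))$, so the first step is to pin down that algebra. Using the diffeomorphism $p : {\mathbb T}^2 \to T_\theta$ from the Remark and the fact (\cite[Remark 2.9]{K_2019}) that $H^*(A(T_\theta)) \cong H^*(A({\mathbb T}^2))$ as algebras, together with the de Rham theorem for $A(\text{-})$ (\cite[Theorem 2.4]{K_2019}), I would identify $H^*(A(T_\theta))$ with the singular cohomology $H^*({\mathbb T}^2; {\mathbb R})$, hence with the exterior algebra $\wedge(t_1, t_2)$ on two degree-$1$ generators. This gives both the additive structure $H^1(A(T_\theta)) \cong {\mathbb R}^2$ and $H^2(A(T_\theta)) \cong {\mathbb R}$, and the multiplicative fact that the product $H^1 \otimes H^1 \to H^2$ is the exterior (wedge) multiplication, so a basis $x, y$ of $H^1$ satisfies $x^2 = y^2 = 0$ and $xy \neq 0$.

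Next I would feed in Corollary~\ref{cor:main}: the map $\Theta$ restricts to an isomorphism $H^1(\Omega(T_\theta)) \oplus \text{Fl}^{\bullet}(T_\theta) \xrightarrow{\cong} H^1(A(T_\theta))$. Since $H^1(\Omega(T_\theta)) \cong {\mathbb R}$ is generated by the class $t$ (from \cite[Exercise 119]{IZ}), and $\text{Fl}^{\bullet}(T_\theta) \cong {\mathbb R}$ is generated by a flow bundle $\xi$ with connection $1$-form, the images $\Theta(t)$ and $\Theta(\xi)$ form a basis of $H^1(A(T_\theta))$. Therefore the subalgebra of $H^*(A(T_\theta))$ generated by $\Theta(t)$ and $\Theta(\xi)$ already contains all of $H^1$, and to conclude $H^*(A(T_\theta)) \cong \wedge(\Theta(t), \Theta(\xi))$ it suffices to check the two relations $\Theta(t)^2 = 0$, $\Theta(\xi)^2 = 0$ and the nonvanishing $\Theta(t)\Theta(\xi) \neq 0$ in $H^2(A(T_\theta))$.

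For the squares: $\Theta(t)^2 = 0$ because $t \in H^1(\Omega(T_\theta))$ lives in an exterior algebra on one generator (so $t^2 = 0$ already upstairs), and $\Theta$ restricted to $H^1(\Omega(T_\theta))$ is (by the discussion before Theorem~\ref{thm:main}) the composite of the algebra map $H(\alpha)$ with the algebra map $\text{edge}_1$, hence $\Theta(t)^2 = \Theta_{\text{alg}}(t^2) = 0$; alternatively any class in $H^1$ of an exterior algebra on two generators has zero square, so $\Theta(\xi)^2 = 0$ is automatic once we know $H^*(A(T_\theta)) \cong \wedge(t_1,t_2)$. The nonvanishing $\Theta(t)\Theta(\xi) \neq 0$ is then forced: two elements of $H^1 \cong {\mathbb R}^2$ in an exterior algebra $\wedge(t_1,t_2)$ have nonzero product if and only if they are linearly independent, and $\Theta(t), \Theta(\xi)$ are linearly independent since $\Theta$ is injective on $H^1(\Omega(T_\theta)) \oplus \text{Fl}^{\bullet}(T_\theta)$. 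Counting dimensions ($1 + 2 + 1 = 4 = \dim \wedge(t_1,t_2)$) then shows the inclusion $\wedge(\Theta(t),\Theta(\xi)) \hookrightarrow H^*(A(T_\theta))$ is an isomorphism.

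The main obstacle I anticipate is making the identification of $H^*(A(T_\theta))$ as an \emph{algebra} (not merely as a graded vector space) with $\wedge(t_1,t_2)$ fully rigorous: one must confirm that the isomorphism $H^*(A(T_\theta)) \cong H^*(A({\mathbb T}^2))$ in \cite[Remark 2.9]{K_2019} is multiplicative and that the de Rham theorem for $A(\text{-})$ is an isomorphism of algebras, so that the cup product is identified with the topological one on ${\mathbb T}^2$. Once that is in hand, everything else is linear algebra in a four-dimensional exterior algebra plus bookkeeping about which generator of $H^1(A(T_\theta))$ comes from $\Omega(T_\theta)$ and which from the flow bundle, which is exactly the content of Corollary~\ref{cor:main}.
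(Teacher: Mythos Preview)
Your approach is essentially the paper's own: identify $H^*(A(T_\theta))\cong\wedge(t_1,t_2)$ as an algebra, then use Corollary~\ref{cor:main} to see that $\Theta(t)$ and $\Theta(\xi)$ form a basis of $H^1$, which forces them to generate the whole exterior algebra. The paper obtains the algebra identification from the Leray--Serre spectral sequence inside the proof of Corollary~\ref{cor:main} rather than from \cite[Remark~2.9]{K_2019}, but the logic is the same and your extra verification of the relations $\Theta(t)^2=\Theta(\xi)^2=0$, $\Theta(t)\Theta(\xi)\neq 0$ is harmless linear algebra in $\wedge(t_1,t_2)$.

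One correction: $p:{\mathbb T}^2\to T_\theta$ is \emph{not} a diffeomorphism (indeed the Remark in the Introduction stresses that ${\mathbb T}^2$ and $T_\theta$ are not even smoothly homotopy equivalent, precisely because their Souriau de Rham cohomologies differ); it is the projection of a diffeological principal ${\mathbb R}$-bundle. You never actually use the false claim, since you immediately pass to the cohomology isomorphism from \cite[Remark~2.9]{K_2019}, but the wording should be fixed.
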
  

\section{Proofs of Theorem \ref{thm:main} and Corollary \ref{cor:main}}
We begin by considering invariant differential forms on nebulae of  dfiffeological spaces.  
\begin{lem}\label{lem:Subduction}
Let $\pi : Y \to X$ be a subduction and ${\mathcal G}_Y$ a generating family of $Y$. 
Then the map $\pi^* : A({\mathcal N}_X) \to A({\mathcal N}_Y)$ induced by $\pi$ gives rise to 
a map $\pi^* :  A({\mathcal N}_X)^{\mathsf{M}_X} \to A({\mathcal N}_Y)^{\mathsf{M}_Y}$, 
where the nebula ${\mathcal N}_X$ is defined by the generating family
$\pi_*{\mathcal G}_Y:=\{\pi \circ \phi \mid \phi \in {\mathcal G}_Y \}$ induced by ${\mathcal G}_Y$.
\end{lem}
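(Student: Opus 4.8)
The plan is to unwind the definitions of the nebulae, the gauge monoids, and the induced maps, and to check that the pullback of $\mathsf{M}_Y$-invariant forms along the nebula-level map induced by $\pi$ lands in $\mathsf{M}_X$-invariant forms. First I would fix the generating family: since $\pi$ is a subduction, every plot of $X$ lifts locally along $\pi$, so $\pi_*\mathcal{G}_Y$ is indeed a generating family of $X$ (one checks the covering/locality conditions using that $\mathcal{G}_Y$ generates $\D^Y$ and that $\pi$ is a subduction; this is where surjectivity of $\pi$ on plots is used). With this choice, there is a natural map of nebulae $\widetilde{\pi}: \mathcal{N}_Y \to \mathcal{N}_X$ sending $(\phi, r) \in \{\phi\}\times \mathrm{dom}(\phi)$ to $(\pi\circ\phi, r) \in \{\pi\circ\phi\}\times\mathrm{dom}(\pi\circ\phi)$, which is smooth (it is smooth on each summand and the domains match because $\mathrm{dom}(\pi\circ\phi)=\mathrm{dom}(\phi)$) and satisfies $ev_X \circ \widetilde{\pi} = \pi \circ ev_Y$. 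The map $\pi^*: A(\mathcal{N}_X)\to A(\mathcal{N}_Y)$ in the statement is $\widetilde{\pi}^{*}$, the pullback of singular de Rham forms along $\widetilde{\pi}$, which is defined since $A(-)$ is contravariantly functorial in smooth maps.

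The core step is the following: the map $\widetilde{\pi}$ is equivariant in the sense that for every $f \in \mathsf{M}_Y$ there is a (not necessarily unique) $\bar f \in \mathsf{M}_X$ with $\widetilde{\pi}\circ f = \bar f \circ \widetilde{\pi}$, and conversely every $g \in \mathsf{M}_X$ "comes from" enough of $\mathsf{M}_Y$ that invariance under $\mathsf{M}_Y$ forces invariance of $\widetilde{\pi}^*\omega$ under $\mathsf{M}_X$. Concretely: given $g\in\mathsf{M}_X$, because $\pi$ is a subduction one can, on each summand $\{\psi\}\times\mathrm{dom}(\psi)$ of $\mathcal{N}_X$ with $\psi = \pi\circ\phi \in \pi_*\mathcal{G}_Y$, lift $g$ to an endomorphism $\hat g$ of the corresponding summands of $\mathcal{N}_Y$ — using that $g$ preserves $ev_X$ and hence moves $(\psi,r)$ within a fibre of $\pi\circ ev_Y$ — obtaining $\hat g \in \mathsf{M}_Y$ with $\widetilde{\pi}\circ\hat g = g\circ\widetilde{\pi}$; the finiteness of the support of $\hat g$ follows from that of $g$. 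Then for $\omega \in A(\mathcal{N}_X)^{\mathsf{M}_X}$, well, actually we want the other direction: for $\omega \in A(\mathcal{N}_X)^{\mathsf{M}_X}$ and $f\in\mathsf{M}_Y$, choose $\bar f\in\mathsf{M}_X$ with $\widetilde{\pi}\circ f=\bar f\circ\widetilde{\pi}$ (such $\bar f$ exists since $f$ preserves $ev_Y$, hence $\widetilde{\pi}\circ f$ preserves $ev_X$ and descends) and compute $f^*(\widetilde{\pi}^*\omega) = \widetilde{\pi}^*(\bar f^*\omega) = \widetilde{\pi}^*\omega$, so $\widetilde{\pi}^*\omega \in A(\mathcal{N}_Y)^{\mathsf{M}_Y}$. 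This is exactly the assertion.

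The main obstacle I expect is the bookkeeping in the previous paragraph: producing, for a given gauge transformation on one side, a genuine element of the gauge monoid on the other side — i.e. an honest smooth endomorphism of the nebula with finite support and compatible with the evaluation map — rather than just a fibrewise-defined set map. The subtlety is that $\pi_*\mathcal{G}_Y$ may contain a plot $\psi$ in many ways (as $\pi\circ\phi$ for several $\phi$), so one must be careful about how $\widetilde{\pi}$ treats the indexing set of the nebula and check that the required $\bar f$ or $\hat g$ can be chosen globally and smoothly; here one leans on the fact that elements of $\mathsf{M}$ act as the identity off a finite set of components and that $\pi$, being a subduction, admits local smooth sections in the appropriate sense after restricting along plots. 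Once that compatibility is in hand, the equality $f^*\widetilde{\pi}^* = \widetilde{\pi}^*\bar f^*$ is just contravariant functoriality of $A(-)$ and the lemma follows immediately.
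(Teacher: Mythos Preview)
Your overall strategy is right and matches the paper's: take $\omega\in A(\mathcal N_X)^{\mathsf M_X}$ and $f\in\mathsf M_Y$, produce an element of $\mathsf M_X$ that intertwines with $f$ along the nebula map $\widetilde\pi$, and conclude by functoriality of $A(-)$. The difference is at the technical step you flag as the ``main obstacle,'' and there your resolution does not go through as stated.

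A single global $\bar f\in\mathsf M_X$ with $\widetilde\pi\circ f=\bar f\circ\widetilde\pi$ need not exist. Since $\pi_*\mathcal G_Y$ is a \emph{set}, distinct $\phi_1\neq\phi_2\in\mathcal G_Y$ with $\pi\circ\phi_1=\pi\circ\phi_2$ give the same component of $\mathcal N_X$ under $\widetilde\pi$. If $f$ acts nontrivially on $\{\phi_1\}\times\mathrm{dom}(\phi_1)$ but as the identity on $\{\phi_2\}\times\mathrm{dom}(\phi_2)$, then $\widetilde\pi\circ f$ takes two different values on points with the same $\widetilde\pi$-image, so it cannot factor through $\widetilde\pi$. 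Neither finite support nor the subduction's local sections repairs this; in fact the paper's argument never invokes the subduction hypothesis beyond ensuring $\pi_*\mathcal G_Y$ generates.

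The paper sidesteps the problem by working \emph{simplex by simplex}. For a fixed $\sigma:\mathbb A^n\to\mathcal N_Y$, connectedness of $\mathbb A^n$ forces the image of $\sigma$ into a single component $\{\phi\}\times\mathrm{dom}(\phi)$; if $f(\phi,u)=(\phi',\eta(u))$ on that component, one defines $\underline\eta\in\mathsf M_X$ by $\underline\eta(\pi\circ\phi,u)=(\pi\circ\phi',\eta(u))$ and the identity elsewhere. This $\underline\eta$ may depend on $\sigma$, but one only needs $\widetilde\pi\circ f\circ\sigma=\underline\eta\circ\widetilde\pi\circ\sigma$, which suffices because elements of $A(\mathcal N_X)$ are tested on simplices individually. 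Then
\[
(f\cdot\widetilde\pi^*\omega)(\sigma)=\omega(\widetilde\pi\circ f\circ\sigma)=\omega(\underline\eta\circ\widetilde\pi\circ\sigma)=(\underline\eta\cdot\omega)(\widetilde\pi\circ\sigma)=\omega(\widetilde\pi\circ\sigma)=(\widetilde\pi^*\omega)(\sigma).
\]
So the missing idea is precisely this localization to a single simplex, which makes the intertwining element trivially constructible with support a single component.
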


\begin{proof}
For  $\omega \in A^*({\mathcal N}_X)^{\mathsf{M}_X}$ and $\eta \in \mathsf{M}_Y$, we show that 
$\eta \cdot \pi^*(\omega) = \pi^*(\omega)$. Let $\sigma : {\mathbb A}^n \to {\mathcal N}_Y$ be an element in $S^D_n({\mathcal N}_X)$, namely a smooth map 
from ${\mathbb A}^n$. Since ${\mathbb A}^n$ is connected, it follows that the image of $\sigma$ 
is contained in a component $\{\phi\} \times \text{dom}(\phi)$ of ${\mathcal N}_Y$. We define a smooth map 
$\underline{\eta} : {\mathcal N}_X \to {\mathcal N}_X$ by $\underline{\eta}(\pi\circ \phi, u)= (\pi\circ \phi', \eta(u))$ and by the identity maps in other components,
where $\eta (\phi, u) = (\phi', \eta(u))$. Since $\phi(u) = ev(\eta(\phi, u))= ev(\eta(\phi', \eta(u)))= \phi'(\eta(u))$, it follows that 
$ev\circ \underline{\eta} = \underline{\eta}$ and hence $\underline{\eta} \in \mathsf{M}_X$. Observe that 
$\pi\circ \eta\circ \sigma = \underline{\eta} \circ \pi\circ \sigma$. Thus we see that 
$(\eta \cdot \pi^*(\omega))(\sigma) = \pi^*(\omega)(\eta\circ \sigma)= \omega (\pi \circ \eta \sigma) = \omega (\underline{\eta} \circ \pi\circ \sigma) = 
(\underline{\eta}\cdot \omega)(\pi\circ \sigma) = \omega(\pi\circ \sigma) = \pi^*(\omega)(\sigma)$. This completes the proof. 
\end{proof}

Under the assumption in Theorem \ref{thm:main}, we have a commutative diagram 
\[
\xymatrix@C35pt@R18pt{
H^*(A(X)) \ar[r]^-{ev^*=\text{edge}_1} \ar[d]_{H^*(A(\pi))}& H^*(A({\mathcal N}_X)^{\mathsf{M}_X}) \ar[d]^{\pi^*} &\hspace{-1.7cm}= {}_AE_2^{0,*}(X)\\
H^*(A(M)) \ar[r]^-{ev^*} & H^*(A({\mathcal N}_M)^{\mathsf{M}_M}) &\hspace{-1.5cm}=  {}_AE_2^{0,*}(M) \\
H^*(\Omega(M)) \ar[u]^{H(\alpha)}_{\cong} \ar[r]_-{ev^*}^-{\cong} & H^*(\Omega({\mathcal N}_M)^{\mathsf{M}_M}) \ar[u]_{f(\alpha)_2}& \hspace{-1.3cm}= {}_\Omega E_2^{0,*}(M). 
}
 \eqnlabel{add-2}
\]
Since $M$ is a manifold, it follows from \cite[Theorem 2.4]{K_2019} that $H(\alpha)$ is an isomorphism. 
Observe that, in constructing the spectral sequences, 
we use the generating family ${\mathcal G}_M$ of $M$ consisting of all plots whose domains are open balls in Euclidean spaces.

(I) On the map $H^*(A(\pi))$: 
The fibre $F$ of the bundle $F\to M\stackrel{\pi}{\to} X$ is a manifold. Then the bundle is a fibration in the sense of Christensen and Wu; see 
\cite[Proposition 4.24 and Corollary 4.32]{C-W}. Therefore, the result \cite[Theorem 5.6]{K_2019}
 enables us to obtain 
the Leray--Serre spectral sequence $\{{}_{LS}E_r^{*,*}, d_r\}$ 
for the bundle. We consider the edge homomorphism 
$
\text{edge}^i : H^i(A(X)) \stackrel{\cong}{\to} {}_{LS}E_2^{i,0} \to 
{}_{LS}E_\infty^{i,0} \to H^i(A(M)).  
$
Observe that the map $\text{edge}^i$ is nothing but the map $H^i(A(\pi))$. 

(I)-(1): For degree reasons, we see that  ${}_{LS}E_2^{1,0} \cong {}_{LS}E_\infty^{1,0}$ in the definition of the edge map. 
Thus $\text{edge}^1$ is injective and then so is $H^1(A(\pi))$. 

(I)-(2): We have a commutative diagram 
\[
\xymatrix@C15pt@R15pt{
H^2(A(X)) \ar[d]_{H^*(A(\pi))}\ar[r]^-{\cong}&  {}_{LS}E_2^{2,0}\ar[r]^-{\cong} &  
\text{Im} \ d_2^{0,1}\oplus {}_{LS}E_3^{2,0} \ar[d]_{pr_2}\\
H^2(A(M))   & {}_{LS}E_{\infty}^{2,0} \ar[l]_-j &  {}_{LS}E_3^{2,0}, \ar[l]_{\cong}\\
}
\]
where $pr_2$ denoted the projection into the second factor and $j$ is the inclusion of the filtration which appears in the spectral sequence. 
Therefore, it follows that 
$\text{Ker} \ \! H^2(A(\pi)) \cong  \text{Im} \ \! d_2^{0,1}$.  

(II) The injectivity of $f(\alpha)_2$: Recall the commutative diagram (2.1). 
By degree reasons, we see that the elements in ${}_\Omega E_2^{0,1}$ are non-exact. 
Since $M$ is a manifold, it follows from the argument in \cite[Section 20]{IZ_2019} that ${}_\Omega E_2^{1,0}$ is trivial and then each element in 
${}_\Omega E_2^{0,2}$ is also non-exact; that is, all elements in ${}_\Omega E_2^{0,2}$ are not in the image of the differential 
$d_2 : {}_\Omega E_2^{1,0} \to {}_\Omega E_2^{0,2}$. 

This yields that the upper-left hand side surjective map in (2.1) is bijective. It turns out that 
the map $f(\alpha)_2$ is injective for $*= 1, 2$ and then the map $(ev^*)^i : H^i(A(M)) \to  H^i(A({\mathcal N}_M)^{\mathsf{M}_M}) =  {}_AE_2^{0,i}(M)$ is injective for $i = 1, 2$. 

\begin{proof}[Proof of Theorem \ref{thm:main}] Consider the commutative diagram (3.1). The injectivity of the maps described in  (I)-(1) and (II) implies the result (1).  Moreover, by (II), we see that $\text{Ker} \ \!\text{edge}_1^2 \subset \text{Ker} \ \! H^2(A(\pi))$. 
The argument (I)-(2) enables us conclude that $\dim \text{Ker} \ \!\text{edge}_1^2 \leq \dim \text{Ker} \ \! H^2(A(\pi)) = 
\dim \text{Im} \ \! d_2^{0,1} \leq \dim H^1(A(F))$. We have the result  (2).
\end{proof} 

Before proving Corollary \ref{cor:main}, we recall a result on the \v{C}ech cohomology of a diffeological torus. 
Let $T_K$ be a diffeological torus, namely a quotient ${\mathbb R}^n/K$ endowed with the quotient diffeology, 
where  $K$ is a discrete subgroup of ${\mathbb R}^n$. 

\begin{prop}\text{\em (}\cite[Corollary]{IZ_2019}\text{\em )}\label{prop:C-G} One has an isomorphism 
$\check{H}^*(T_K, {\mathbb R}) \cong H^*(K; {\mathbb R})$.
Here $H^*(K; {\mathbb R})$ denotes the ordinary cohomology of $K$.  
\end{prop}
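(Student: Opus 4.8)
The plan is to compute directly the Hochschild cohomology that defines $\check{H}^*(T_K,\mathbb{R})$, namely $HH^*(\mathbb{R}\mathsf{M},\mathrm{map}(\mathcal{G},\mathbb{R}))$, by exploiting the diffeological covering $p:\mathbb{R}^n\to T_K=\mathbb{R}^n/K$ with deck group $K$. Since an open ball is simply connected, every plot of $T_K$ with ball domain lifts along $p$, so the standard generating family $\mathcal{G}_{T_K}$ (plots with open balls as domains) coincides with the push-forward $p_*\mathcal{G}_{\mathbb{R}^n}$; hence $\mathcal{G}_{T_K}\cong\mathcal{G}_{\mathbb{R}^n}/K$ for the free translation action $\phi\mapsto T_k\circ\phi$, the nebula $\mathcal{N}_{T_K}$ is $\mathcal{N}_{\mathbb{R}^n}/K$, and $ev_{T_K}$ is the descent of $ev_{\mathbb{R}^n}$ (cf.\ the set-up of Lemma \ref{lem:Subduction}). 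I record two consequences used below: each connected component of $\mathcal{N}_{T_K}$ is an open ball, hence locally acyclic by the Poincar\'e lemma, and the covering $\mathcal{N}_{\mathbb{R}^n}\to\mathcal{N}_{T_K}$ has deck group $K$.

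The heart of the proof is to identify $HH^*(\mathbb{R}\mathsf{M},\mathrm{map}(\mathcal{G},\mathbb{R}))$ with the group cohomology $H^*(K;\mathbb{R})$ of $K$ acting trivially on $\mathbb{R}$. The key local fact is that an element $f\in\mathsf{M}_{T_K}$ carries each non-fixed ball component $\{\bar\phi\}\times B$ into some component $\{\bar\psi\}\times B'$ by a map $(\bar\phi,u)\mapsto(\bar\psi,g(u))$ with $p\circ\psi\circ g=p\circ\phi$; since $B$ is connected and $u\mapsto\psi(g(u))-\phi(u)$ is a smooth map whose image is a connected subset of the totally disconnected set $K\subset\mathbb{R}^n$, this difference equals a constant $k(f,\bar\phi)\in K$. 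Thus, modulo the contractible ball directions, all the data carried by $\mathsf{M}$ and by the bimodule $\mathrm{map}(\mathcal{G},\mathbb{R})$ is the monodromy of $p$ with values in $K$. Building on this I would exhibit a morphism from the standard cochain complex $C^*(K;\mathbb{R})$ to the Hochschild complex $\mathrm{map}(\mathsf{M}^\bullet,\mathrm{map}(\mathcal{G},\mathbb{R}))$ and prove it a quasi-isomorphism, the Poincar\'e lemma on each ball domain being precisely what annihilates the remaining ``local'' part of the Hochschild complex; equivalently, one would set up a Cartan--Leray-type spectral sequence $H^p(K;\check{H}^q(\mathbb{R}^n,\mathbb{R}))\Rightarrow\check{H}^{p+q}(T_K,\mathbb{R})$ for the covering $p$, which collapses because $\check{H}^*(\mathbb{R}^n,\mathbb{R})=\mathbb{R}$ is concentrated in degree $0$ ($\mathbb{R}^n$ being a smoothly contractible manifold, whose \v{C}ech--de Rham spectral sequence degenerates to its ordinary de Rham cohomology). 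Either route yields $\check{H}^*(T_K,\mathbb{R})\cong HH^*(\mathbb{R}K,\mathbb{R})=H^*(K;\mathbb{R})$.

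I expect the middle step to be the main obstacle. The gauge monoid $\mathsf{M}$ is an enormous and unwieldy monoid of endomorphisms of the nebula, and distilling from the pair $(\mathbb{R}\mathsf{M},\mathrm{map}(\mathcal{G},\mathbb{R}))$ the small invariant $H^*(K;\mathbb{R})$ requires careful bookkeeping of the finite-support condition together with the combinatorics of the covering $\mathcal{N}_{\mathbb{R}^n}\to\mathcal{N}_{T_K}$. The contractibility of the ball domains guarantees that nothing beyond the monodromy survives, but turning this into an honest quasi-isomorphism of complexes --- or into a genuine Cartan--Leray spectral sequence within the Hochschild-theoretic framework of \cite{IZ_2019} --- is the technical core of the matter, and is exactly the computation of Iglesias-Zemmour that the proposition quotes.
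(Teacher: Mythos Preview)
The paper does not prove this proposition at all: it is stated with the attribution ``(\cite[Corollary]{IZ_2019})'' and no argument is given, since the result is quoted directly from Iglesias-Zemmour. There is therefore nothing in the present paper to compare your proposal against. You yourself recognize this in your final sentence, noting that the technical core ``is exactly the computation of Iglesias-Zemmour that the proposition quotes.''

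That said, your sketch is a plausible outline of how the result in \cite{IZ_2019} is obtained: the key ingredients --- lifting ball-domain plots along the covering $p:\mathbb{R}^n\to T_K$, extracting from each gauge-monoid element a locally constant $K$-valued monodromy, and collapsing the local data via the Poincar\'e lemma on balls --- are indeed what one expects to drive the identification $\check{H}^*(T_K,\mathbb{R})\cong H^*(K;\mathbb{R})$. You are also candid about where the genuine work lies, namely in turning the heuristic ``only the monodromy survives'' into an actual quasi-isomorphism (or Cartan--Leray spectral sequence) within the Hochschild framework. For the purposes of this paper, however, the correct response is simply to cite \cite{IZ_2019}, as the author does.
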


\begin{proof}[Proof of Corollary \ref{cor:main}] Let $T_\theta$ be the irrational torus. 
By definition, $T_\theta$ is the diffeological space  $T^2/S_\theta$ endowed with the quotient diffeology,
where $S_\theta$ is the subgroup $\{ (e^{2\pi i t}, e^{2\pi i \theta t}) \in T^2\mid t \in {\mathbb R}\}$ which is diffeomorphic to ${\mathbb R}$ as a Lie group. 
Then we have a principal ${\mathbb R}$-bundle of the form 
${\mathbb R} \to T^2 \to T_\theta$ which is a diffeological bundle; see \cite[8.11 and 8.15]{IZ}. Therefore, the Leray--Serre spectral sequence 
\cite[Theorem 5.4]{K_2019} for the bundle allows us to conclude that $H^*(A(T_\theta))\cong H^*(A(T^2))\cong H^*(\Omega(T^2))\cong \wedge (t_1, t_2)$, where 
$\text{deg} \ t_i = 1$. In particular, $H^1(A(T_\theta)) \cong {\mathbb R}\oplus{\mathbb R}$.  

Moreover, by virtue of Theorem \ref{thm:main}, we see that 
the map $\text{edge}_1 : H^1(A(T_\theta)) \to {}_AE_2^{0,1}$ is a monomorphism. 
Since $T_\theta$ is isomorphic to a diffeological torus of the form ${\mathbb R}/({\mathbb Z} +\theta {\mathbb Z})$
; see \cite[Exercise 31, 3)]{IZ}, 
it follows from Proposition \ref{prop:C-G} that 
$\check{H}^*(T_\theta, {\mathbb R}) \cong H^*({\mathbb Z} +\theta {\mathbb Z}; {\mathbb R}) \cong 
H^*({\mathbb Z} \oplus{\mathbb Z}; {\mathbb R})$. This yields that  
${}_AE_2^{0, 1}\oplus {}_AE_3^{1,0} \cong \check{H}^1(T_\theta, {\mathbb R})
\cong  {\mathbb R}\oplus{\mathbb R}$. The injectivity of the edge map above implies that ${}_AE_3^{1,0}(T_\theta) =0$ and hence the map $\Theta$ induces an isomorphism 
$H^1(\Omega(T_\theta)) \oplus {}_{\Omega}E_3^{1,0}\stackrel{\cong}{\to} H^1(A(T_\theta))$. It follows from 
\cite[Section 19]{IZ_2019} that ${}_{\Omega}E_2^{1,0}\cong \text{Fl}^{\tiny{\bullet}}(T_\theta)$. Furthermore, 
we have $H^2(\Omega(T_\theta))=0$; see \cite[Exercise 119]{IZ}. 
It turns out that ${}_{\Omega}E_2^{1,0}\cong {}_{\Omega}E_3^{1,0}$. We have the result. 
\end{proof}

\section{From the second singular de Rham cohomology to the \v{C}ech cohomology}

We define the edge homomorphism $\text{edge} : H^i(A(X)) \to \check{H}^i(X)$ by the composite of the maps in the lower sequence in (2.1). 
For degree reasons, we see that each element in ${}_AE_2^{0,1}$ the $E_2$-term of the \v{C}ech--de Rham spectral sequence is non-exact. 
Then, the map $\text{edge} :  H^1(A(X)) \to \check{H}^1(X)$ is injective under the same assumption as in Theorem \ref{thm:main}. 
In order to consider the edge map in degree $2$, 
we generalize Lemma \ref{lem:Subduction} introducing a generating family of a multi-set. 
Let $\pi : Y \to X$ be a subduction and ${\mathcal G}_Y$ a generating family of $Y$. We define 
${\mathcal G}_X^{\text{multi}}$ by the multi-set $\coprod_{\phi \in {\mathcal G}_Y} \{\pi \circ \phi\}$.   

\begin{prop}\label{prop:degree2}
Under the same assumption as in Theorem \ref{thm:main}, if $H^1(A(F)) = 0$, then the edge map $H^2(A(X)) \to \check{H}^2(X)$ is injective, where 
$\check{H}^2(X)$ is the \v{C}ech cohomology associated with ${\mathcal G}_X^{\text{multi}}$. 
\end{prop}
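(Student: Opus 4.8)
The plan is to follow the same architecture as the proof of Theorem \ref{thm:main}, but push everything one step further along the \v{C}ech--de Rham spectral sequence, from the $E_2$-term into the actual \v{C}ech cohomology. Concretely, the edge map $H^2(A(X)) \to \check{H}^2(X)$ factors as the composite
\[
H^2(A(X)) \xrightarrow{\ \text{edge}_1^2\ } {}_AE_2^{0,2}(X) \twoheadrightarrow {}_AE_\infty^{0,2}(X) \hookrightarrow \check{H}^2(X),
\]
so it suffices to prove two things: first, that $\text{edge}_1^2$ is injective when $H^1(A(F)) = 0$; second, that the surjection ${}_AE_2^{0,2}(X) \to {}_AE_\infty^{0,2}(X)$ is an isomorphism, i.e.\ that the differential $d_2 : {}_AE_2^{1,0}(X) \to {}_AE_2^{0,2}(X)$ of the \v{C}ech--de Rham spectral sequence for $A(X)$ vanishes (there are no higher differentials into the $(0,2)$ spot for degree reasons). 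The first point is immediate from Theorem \ref{thm:main}(2): when $\dim H^1(A(F)) = 0$ the kernel of $\text{edge}_1^2$ is zero.

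For the second point I would first reduce to the manifold $M$ via the bundle $F \to M \xrightarrow{\pi} X$. Using Lemma \ref{lem:Subduction} in the generalized form for the multi-set generating family ${\mathcal G}_X^{\text{multi}} = \coprod_{\phi \in {\mathcal G}_Y}\{\pi\circ\phi\}$ (here $Y = M$, ${\mathcal G}_Y = {\mathcal G}_M$), the subduction $\pi$ induces a map of \v{C}ech--de Rham spectral sequences and in particular a commutative square relating $d_2^{1,0}$ for $X$ with $d_2^{1,0}$ for $M$. Since $M$ is a manifold, the argument recalled in the proof of Theorem \ref{thm:main} part (II) — quoting \cite[Section 20]{IZ_2019} — gives ${}_\Omega E_2^{1,0}(M) = 0$, and then the injectivity of $f(\alpha)_2$ together with the commutative diagram (2.1) forces ${}_AE_2^{1,0}(M)$ to carry the trivial relevant structure; combined with the injectivity of $\pi^* : {}_AE_2^{*,*}(X) \to {}_AE_2^{*,*}(M)$ on the appropriate pieces (which is what the generalized Lemma \ref{lem:Subduction} with the multi-set is designed to supply, so that no plots are lost under $\pi_*$), one concludes that $d_2^{1,0}$ for $X$ lands in zero, hence ${}_AE_2^{0,2}(X) \cong {}_AE_\infty^{0,2}(X)$.

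The last step is to identify the infinite term with a subspace of $\check{H}^2(X)$, i.e.\ to run the edge-map inclusion ${}_AE_\infty^{0,2} \hookrightarrow H^2(\text{Tot}\,{}'C^{*,*}) = \check{H}^2(X)$ from the bottom row of (2.1); this is formal, being the inclusion of the bottom filtration stage of a first-quadrant spectral sequence converging to \v{C}ech cohomology, the convergence being guaranteed by the Poincar\'e lemma for $A(\text{-})$ (\cite[Theorem 2.4]{K_2019}). Chaining the three maps and using that the middle one is now an isomorphism and the outer two are injective gives the claim.

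The main obstacle I expect is the second step: making precise that the multi-set generating family ${\mathcal G}_X^{\text{multi}}$, rather than the honest set $\pi_*{\mathcal G}_Y$, is what is needed for $\pi^*$ to be injective on the relevant pieces of ${}_AE_2^{*,*}$ and hence for the reduction-to-$M$ argument to detect the vanishing of $d_2^{1,0}$. One has to check that the gauge monoid $\mathsf{M}_X$ built from the multi-set nebula $\coprod_{\phi\in{\mathcal G}_Y}\{\phi\}\times\text{dom}(\pi\circ\phi)$ still maps compatibly into $\mathsf{M}_Y$ under the construction $\eta \mapsto \underline{\eta}$ of Lemma \ref{lem:Subduction}, and that the resulting Hochschild/\v{C}ech machinery with multiplicities behaves exactly as in \cite{IZ_2019}; the degree bookkeeping (ensuring there really are no nonzero differentials other than $d_2$ landing in the $(0,2)$ position) is routine by first-quadrant considerations but must be stated carefully.
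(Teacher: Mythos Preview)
Your factorization of the edge map and your first and third steps are correct: injectivity of $\text{edge}_1^2$ follows from Theorem~\ref{thm:main}(2) once $H^1(A(F))=0$, and ${}_AE_\infty^{0,2}\hookrightarrow\check H^2(X)$ is formal.

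The gap is in the second step, and it is more serious than the multi-set technicality you flag at the end. You want to conclude that $d_2:{}_AE_2^{1,0}(X)\to{}_AE_2^{0,2}(X)$ vanishes, via (i)~${}_AE_2^{1,0}(M)$ carrying ``trivial relevant structure'' and (ii)~injectivity of $\pi^*$ on ${}_AE_2$. Neither is available. For (i), the morphism $f(\alpha)_2$ runs from ${}_\Omega E_2$ to ${}_AE_2$, so the vanishing ${}_\Omega E_2^{1,0}(M)=0$ says nothing about ${}_AE_2^{1,0}(M)$ or about $d_2$ on the $A$-side; injectivity of $f(\alpha)_2$ points in the wrong direction here. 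For (ii), Lemma~\ref{lem:Subduction} and its multi-set variant only construct the map $\pi^*$; they do not establish injectivity on $E_2^{1,0}$ or $E_2^{0,2}$. The paper never asserts that $d_2^{1,0}$ vanishes for $X$, and there is no evident reason it should.

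The paper argues by contradiction instead, and the decisive maneuver is to transfer the problem back to the $\Omega$-spectral sequence over $M$. Suppose $0\neq x\in\ker(\text{edge})$. Then $ev^*(x)\in{}_AE_2^{0,2}(X)$ is nonzero (your step~1) and $d_2$-exact (the only differential into bidegree $(0,2)$). Push it to $M$ via the morphism $f(\widetilde\pi)$ of spectral sequences from Proposition~\ref{prop:morphismsOF_SS}: the image remains nonzero (since $H^2(A(\pi))$ is injective and $ev^*$, $f(\alpha)_2$ are injective over the manifold $M$) and remains $d_2$-exact by naturality. Now invoke diagram~(2.1) with $M$ in place of $X$: because $H(\alpha)$ is an isomorphism on the manifold $M$, one obtains a nonzero element of ${}_\Omega E_2^{0,2}(M)$ which is again $d_2$-exact, forcing ${}_\Omega E_2^{1,0}(M)\neq 0$ and contradicting the triviality of flow bundles over manifolds. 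The step you are missing is precisely this passage through $H(\alpha)^{-1}$ to the $\Omega$-side, where the vanishing of $E_2^{1,0}$ is actually known.
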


\begin{rem}\label{rem:ANimportantRem}
In the proof of \cite[Proposition in \S5]{IZ_2019}, we need the condition (*) for a generating family ${\mathcal G}_X$ that for any plot $P : U \to X$ and each 
$r \in U$, there exists a plot $q : B \to Y$ in ${\mathcal G}_X$ such that $q = P|_B$. To this end, we have chosen the generating family ${\mathcal G}_Y$ 
consisting of all plots whose domains are open balls in Euclidian spaces. 
Let ${\mathcal G}_X^{\text{multi}}$ be the generating multi-family mentioned above. 
Then ${\mathcal G}_X^{\text{multi}}$ also satisfies the condition (*). 
We observe that the inclusion $\pi_*{\mathcal G}_Y \to {\mathcal G}_X^{\text{multi}}$ induces a diffeomorphism 
${\mathcal N}(\pi_*{\mathcal G}_Y)/ev \stackrel{\cong}{\to} {\mathcal N}({\mathcal G}_X^{\text{multi}})/ev$ between nebulae and hence the evaluation map 
gives rise to a diffeomorphism ${\mathcal N}({\mathcal G}_X^{\text{multi}})/ev \stackrel{\cong}{\to} X$; see \cite[1.76]{IZ}. 
\end{rem}

With the notation in Remark \ref{rem:ANimportantRem}, for a map $\eta$ in the monoid $\mathsf{M}_Y$, we define $\underline{\eta}(\pi \circ \phi, r) =  (\pi \circ \psi, \eta(r))$, where $\eta(\phi, r) = (\psi, \eta(r))$. 
Then we have a morphism $\pi' : \mathsf{M}_Y \to \mathsf{M}_X$ of monoids defined  by 
$\pi'(\eta) = \underline{\eta}$. Moreover, we define
\[
\widetilde{\pi}: C_X^{p,q}:=\text{map}(\mathsf{M}_X^p, K^q({\mathcal N}_X)) \to \text{map}(\mathsf{M}_Y^p, K^q({\mathcal N}_Y)) =:C_Y^{p,q}
\]
for $K = \Omega$ and $A$ 
by $\widetilde{\pi}(\varphi)(\eta_1, .., \eta_p) = \pi^*(\varphi(\underline{\eta_1}, ..., \underline{\eta_p}))$. A straightforward calculation shows that $\widetilde{\pi}$ is compatible with the differentials $d_\Omega$, $d_A$ and the Hochschild differential $\delta$. 
Thus we have 

\begin{prop}\label{prop:morphismsOF_SS}
The map $\widetilde{\pi}$ induces a morphism of spectral sequences 
$\{f(\widetilde{\pi})_r\} : \{ {}_KE_r^{*,*}(X), d_r\} \to  \{ {}_KE_r^{*,*}(Y), d_r\}$ for 
$K = \Omega$ and $A$. 
\end{prop}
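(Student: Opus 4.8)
The plan is to promote the chain-level observation recorded just before the proposition — that $\widetilde{\pi}$ commutes with $d_\Omega$, $d_A$ and the Hochschild differential $\delta$ — to a statement about the filtered total complexes, and then to invoke the standard comparison theorem for the spectral sequence of a filtered cochain complex.

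First I would record that $\widetilde{\pi} : C_X^{*,*} \to C_Y^{*,*}$ is a well-defined morphism of double complexes. This rests on three points, each a variant of the proof of Lemma \ref{lem:Subduction} carried out for the multi-set generating family ${\mathcal G}_X^{\text{multi}}$ of Remark \ref{rem:ANimportantRem}: (i) the assignment $\eta \mapsto \underline{\eta}$ defines a homomorphism of monoids $\pi' : \mathsf{M}_Y \to \mathsf{M}_X$ — in particular $\underline{\eta}$ has finite support and satisfies $ev \circ \underline{\eta} = ev$ via the identity $\phi(r) = \psi(\eta(r))$, exactly as in Lemma \ref{lem:Subduction}, and $\underline{\eta\eta'} = \underline{\eta}\,\underline{\eta'}$; (ii) the subduction $\pi$ induces a smooth map ${\mathcal N}_Y \to {\mathcal N}_X$ on nebulae (with respect to the generating families involved) and hence a morphism of cochain algebras $\pi^* : K^*({\mathcal N}_X) \to K^*({\mathcal N}_Y)$ for $K = \Omega, A$; and (iii) the commutation $\pi^* \circ \underline{\eta}^* = \eta^* \circ \pi^*$ as maps $K^*({\mathcal N}_X) \to K^*({\mathcal N}_Y)$, which follows from $\pi \circ \eta \circ \sigma = \underline{\eta} \circ \pi \circ \sigma$ just as in the proof of Lemma \ref{lem:Subduction}. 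Combining (i)--(iii) with the compatibility of $\pi^*$ with the de Rham differentials gives that $\widetilde{\pi}$ is vertically a map of $d_K$-complexes and horizontally a map of Hochschild complexes, which is the assertion already quoted before the proposition.

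Next I would pass to total complexes. Since $\widetilde{\pi}$ preserves the bidegree $(p,q)$, it carries the horizontal filtration $F_X^j = \bigoplus_{q \geq j} C_X^{*,q}$ into $F_Y^j = \bigoplus_{q \geq j} C_Y^{*,q}$; hence $\text{Tot}(\widetilde{\pi})$ is a filtered cochain map between the two filtered complexes defining ${}_K E_r^{*,*}(X)$ and ${}_K E_r^{*,*}(Y)$. Both filtrations are exhaustive and, in each total degree, bounded (the spectral sequences are first quadrant), so the usual functoriality of the spectral sequence of a filtered complex produces maps $f(\widetilde{\pi})_r : {}_K E_r^{*,*}(X) \to {}_K E_r^{*,*}(Y)$ for every $r \geq 1$, compatible with the differentials $d_r$ and satisfying $f(\widetilde{\pi})_{r+1} = H^*(f(\widetilde{\pi})_r)$; on the abutment this map is $H^*(\text{Tot}(\widetilde{\pi}))$. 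This is precisely the claimed morphism of spectral sequences.

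There is no genuine obstacle here: the only steps that require care are (i) and (iii) above, namely that $\pi' : \mathsf{M}_Y \to \mathsf{M}_X$ is really a monoid homomorphism and that $\pi^*$ intertwines the $\mathsf{M}_Y$- and $\mathsf{M}_X$-actions. Both are the same bookkeeping with the evaluation map already performed in Lemma \ref{lem:Subduction}, now done for the multi-set generating family rather than for $\pi_*{\mathcal G}_Y$; once this is in place, the remainder is the purely formal comparison theorem for filtered complexes.
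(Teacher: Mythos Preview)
Your proposal is correct and follows exactly the line the paper takes: the paper's entire argument is the sentence preceding the proposition, namely that a straightforward calculation shows $\widetilde{\pi}$ is compatible with $d_\Omega$, $d_A$ and $\delta$, after which the morphism of spectral sequences is declared to follow. You have simply fleshed out this one-sentence proof with the standard filtered-complex functoriality and the bookkeeping behind the compatibility, so there is nothing to add.
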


We are ready to prove the main result in this section. 

\begin{proof}[Proof of Proposition \ref{prop:degree2}] Suppose that there exists a non-zero element $x$ in the kernel of the map 
$\text{edge} : H^2(A(X)) \to \check{H}^2(X)$. We recall the commutative diagram (3.1). For the map $\pi^*$ in the right-hand side, 
we see that $\pi^*= f(\tilde{\pi})_2^{0,*}$. This follows from the construction the morphism $\{f(\widetilde{\pi})_r\}$ of the spectral sequence 
for the singular de Rham complex in Proposition \ref{prop:morphismsOF_SS}. 

The arguments in (I)-(2) and (II) before the proof of Theorem \ref{thm:main} enable us to deduce that 
$ev^*(x) \in {}_AE_2^{0,2}(X)$ and $f(\widetilde{\pi})_2(ev^*(x)) \in {}_AE_2^{0,2}(M)$ 
are non-zero elements. 
We observe that $H^2(A(\pi))$ is injective because $H^1(A(F)) = 0$ by assumption. 
Since $x$ is in the kernel, it follows that $ev^*(x)$ is a $d_2$-exact element; that is, the element $ev^*(x)$ is in the image of the differential $d_2$ in the $E_2$-term of the spectral sequence. 
The naturality of $f(\widetilde{\pi})_2$ implies that 
$f(\widetilde{\pi})_2(ev^*(x))$ is also $d_2$-exact. 
Then, the commutativity of the diagram (2.1) obtained by replacing $X$ with $M$ implies that 
the non-zero element $(ev^*\circ H(\alpha)^{-1}\circ H^*(A(\pi)))(x)$ in ${}_\Omega E_2^{0,2}(M)$ is $d_2$-exact. 
For degree reasons, we see that $d_2^{1,0}$ is nontrivial and then so is ${}_\Omega E_2^{1,0}$. 

On the other hand, since $M$ is a manifold, it follows that ${}_\Omega E_1^{1,0}(M)\cong \text{FL}(M)=0$. 
In fact, the fibre ${\mathbb R}$ of a flow bundle is contractible and then the bundle admits a smooth global section; see 
\cite[6.7 Theorem]{Steenrod} for a differentiable approximation of a section.  
Thus, we have ${}_\Omega E_2^{1,0} = \text{Ker}\{{}_\Omega d : {}_\Omega E_1^{1,0}(M) \to 
{}_\Omega E_1^{1,1}(M)\} =0$, which is a contradiction. This completes the proof.
\end{proof}

\medskip
\noindent
{\it Acknowledgements}. The author is grateful to Patrick Iglesias-Zemmour for his explanation on the role of the gauge monoid in the construction of 
the \v{C}ech--de Rham spectral sequence. The author also thanks the referee for constructive comments and suggestions on the previous version of this manuscript.  




\begin{thebibliography}{99}
%
\bibitem{B-G} A.K. Bousfield  and V.K.A.M. Gugenheim, On PL de Rham theory and rational homotopy type, 
Memoirs of AMS {\bf 179}(1976).
%
\bibitem{B-Z} E.H. Brown Jr and  R.H. Szczarba, Rational and real homotopy theory with arbitrary fundamental groups, Duke Math. J. {\bf 71} (1993) 299--316.
%
\bibitem{Chen} K.-T. Chen, Iterated path integrals, Bull. Amer. Math. Soc. {\bf 83} (1977), 831--879. 
%
\bibitem{C-W} J.D. Christensen and E. Wu, The homotopy theory of diffeological spaces, New York J. Math. {\bf 20} (2014), 1269--1303.
%
\bibitem{G-H-T}  A. G\'omez-Tato, S. Halperin and D. Tanr\'e, Rational homotopy theory for non-simply connected spaces, 
Transactions of AMS, {\bf 352} (1999),  1493--1525. 
%
\bibitem{IZ} P. Iglesias-Zemmour, Diffeology, Mathematical Surveys and Monographs, 185, AMS, Providence, 2012. 
%
\bibitem{IZ_2019} P. Iglesias-Zemmour, \v{C}ech--de Rham bicomplex in diffeology, preprint, 2019, available at  
{\tt http://math.huji.ac.il/\~{}piz/Site/The Articles/The Articles.html}

%
\bibitem{I} N. Iwase, Whitney approximation for smooth CW complex, preprint, 2019. {\tt  arXiv:2001.02893v2} [math.AT] 
%
\bibitem{I-I} N. Iwase and N. Izumida, Mayer--Vietoris sequence for differentiable/diffeological spaces, 
Algebraic Topology and Related Topics, Editors: M. Singh, Y.  Song and J. Wu, Birkh\"auser Basel (2019), 123--151. 
{\tt arXiv:1511.06948v2} [math.AT]
%
\bibitem{Kreck} M. Kreck, Differential Algebraic Topology, From Stratifolds to Exotic Spheres, Graduate Studies in Math.110, AMS, 2010.
%
\bibitem{K_2019} K. Kuribayashi, Simplicial cochain algebras for diffeological spaces, Indagationes Mathematicae, {\bf 31} (2020), 934--967. {\tt arXiv:1902.10937v6} [math.AT] 
%
\bibitem{M} S. Moriya, The de Rham homotopy theory and differential graded category, 
Math. Z. {\bf 271} (2012), 961--1010. 
%
\bibitem{So} J.-M. Souriau, Groupes diff\'erentiels, Lecture Notes in Math., {\bf 836}, Springer, 1980, 91--128. 
%
\bibitem{Steenrod}  N. Steenrod, The Topology of Fibre Bundles, Princeton Mathematical Series, vol. 14. Princeton University Press, Princeton, N. J., 1951.
%
\bibitem{Su} D. Sullivan, Infinitesimal computations in topology, Publications math\'ematiques de l'I.H.\'E.S., tome  {\bf 47} (1977), 269--331.
%
\bibitem{Sw} R.G. Swan, Thom's theory of differential forms on simplicial sets, Topology {\bf 14} (1975), 271--273.%
\end{thebibliography}
\end{document}